\newcommand{\beqn}{\begin{eqnarray}}             
\newcommand{\eeqn}{\end{eqnarray}}               
\newcommand{\beq}{\begin{eqnarray*}}             
\newcommand{\eeq}{\end{eqnarray*}}
\def\C{\hbox{\rlap{\kern.24em\raise.1ex\hbox
                  {\vrule height1.3ex width.9pt}}C}}
\def\IR{\hbox{\rlap{I}\kern.16em R}}
\def\Q{\hbox{\rlap{\kern.24em\raise.1ex\hbox
                  {\vrule height1.3ex width.9pt}}Q}}
\def\IM{\hbox{\rlap{I}\kern.16em\rlap{I}M}}
\def\IN{\hbox{\rlap{I}\kern.16em\rlap{I}N}}
\def\E{\hbox{\rlap{I}\kern.16em\rlap{I}E}}
\def\P{\hbox{\rlap{I}\kern.16em\rlap{I}P}}
\def\V{\hbox{\rlap{V}\kern.20em\raise.6ex\hbox{v}}}
\def\one{1\!\kern -.12em {\rm l}}
\newcommand{\ignore}[1]{}{}
\numberwithin{equation}{section}
\newcommand{\be}{\begin{eqnarray}}
\newcommand{\ee}{\end{eqnarray}}
\newcommand{\bestar}{\begin{eqnarray*}}
\newcommand{\eestar}{\end{eqnarray*}}
\newtheorem{thm}{Theorem}[section]
\newtheorem{lemma}{Lemma}[section]
\newtheorem{cor}{Corollary}[section]
\begin{document}

\title{Weak convergence of self-normalized partial sums processes \thanks{Technical Report Series of the Laboratory for Research in Statistics
and Probability, No. 455 - May 2011, Carleton University - University of Ottawa}
}
\author{\bf Mikl\'{o}s Cs\"{o}rg\H{o}\thanks{Research supported by an NSERC Canada Discovery Grant at Carleton University.}\\
\footnotesize School of Mathematics and Statistics \\
\footnotesize Carleton University, 1125
Colonel By Drive, Ottawa, ON K1S 5B6, Canada\\
\bf mcsorgo@math.carleton.ca\\
\\
\bf Zhishui Hu\thanks{Partially supported by NSFC(No.10801122) and RFDP(No.200803581009), and by an NSERC Canada Discovery Grant of M.
Cs\"{o}rg\H{o} at Carleton University. }
\\
 \footnotesize
Department of Statistics and Finance,  School of Management \\
\footnotesize  University of Science and Technology of China, Hefei,
Anhui 230026, China\\
\bf huzs@ustc.edu.cn}

\date{ }
\maketitle

\begin{center}
\begin{minipage}{130mm}
{{\bf Abstract.}
 Let $\{X, X_n, n\geq 1\}$ be a sequence of independent and identically distributed  non-degenerate random variables.
 Put $S_0=0,~S_n = \sum^n_{i=1} X_i$ and $V_n^2=\sum^n_{i=1} X_i^2,~n\ge 1.$
 A  weak convergence theorem is established for the self-normalized partial sums
 processes $\{S_{[nt]}/V_n, 0\le t\le 1\}$ when  $X$ belongs to
the domain of attraction of a stable law with index $\alpha \in (0,2]$.
 The respective limiting distributions of the random variables  ${\max_{1\le i\le
n}|X_i|}/{S_n}$ and ${\max_{1\le i\le n}|X_i|}/{V_n}$ are also obtained under the same condition. }

\bigskip\noindent
{\it Key Words:} weak convergence, self-normalized sums, domain of attraction of stable law, maxima of randomly normalized r.v.'s.

\medskip
{\it AMS 2000 Subject Classification:}   60F17, 60G52.

\end{minipage}
\end{center}

\bigskip

\section{Introduction}

Throughout this paper  $\{X, X_n, n\geq 1\}$ denotes a sequence of independent and identically distributed (i.i.d.)  non-degenerate random
variables. Put $S_0=0$, and
$$
S_n = \sum^n_{i=1} X_i, \quad \bar{X}_n=S_n/n,\quad V_n^2=\sum^n_{i=1} X_i^2, \quad n\geq 1.
$$

The quotient $S_n/V_n$ may be viewed as a self-normalized sum. When $V_n=0$ and hence $S_n=0$,
 we define $S_n/V_n$ to be zero. In terms of
$S_n/V_n$, the classical Student statistic $T_n$ is of the form
 \be
 T_n(X)&=& \frac{(1/\sqrt{n})\sum_{i=1}^n X_i}{\Big((1/(n-1))\sum_{i=1}^n (X_i-\bar{X}_n)^2\Big)^{1/2}}\nonumber\\
&=& \frac{S_n/V_n}{\sqrt{(n-(S_n/V_n)^2)/(n-1)}}\, . \label{deft}
 \ee

If $T_n$ or $S_n/V_n$ has an asymptotic distribution, then so does the other, and they coincide [cf. Efron (1969)]. Throughout,
$\stackrel{d}{\rightarrow}$ will indicate convergence in distribution, or weak convergence, in a given context, while $\stackrel{d}{=}$ will
stand for equality in distribution.

The identification of possible limit distributions of normalized sums $Z_n=(S_n-A_n)/B_n$ for suitably chosen real constants $B_n>0$ and $A_n$,
the description of necessary and sufficient conditions for the distribution function of $X$ such that the distributions of $Z_n$ converge to a
limit, were some of the fundamental problems in the classical theory of limit distributions for identically distributed summands [cf. Gnedenko
and Kolmogorov (1968)]. It is now well-known that $Z_n$ has a non-degenerate asymptotic distribution for some suitably chosen real constants
$A_n$ and $B_n>0$ if and only if $X$ is in the domain of attraction of a stable law with index $\alpha \in (0,2]$. When $\alpha=2$, this is
equivalent to $\ell(x):=EX^2I(|X|\le x)$ being a slowly varying function as $x\rightarrow \infty$, one of the necessary and sufficient analytic
conditions for $Z_n\stackrel{d}{\rightarrow} N(0,1),~n\rightarrow \infty$ [cf. Theorem 1a in Feller (1971), page 313], i.e., for $X$ to be in
the domain of attraction of the normal law, written $X\in \mbox{DAN}$. In this case $A_n$ can be taken as $nEX$ and $B_n=n^{1/2}\ell_X(n)$ with
some function $\ell_X(n)$ that is slowly varying at infinity and determined by the distribution of $X$. Moreover,
$\ell_X(n)=\sqrt{\mbox{Var}~(X)}>0$ if $\mbox{Var}~(X)<\infty$,  and $\ell_X(n)\nearrow \infty$ if $\mbox{Var}~(X)=\infty$. Also, $X$ has
moments of all orders less than $2$, and variance of $X$ is positive, but need not be finite. The function $\ell(x)=EX^2I(|X|\le x)$ being
slowly varying at $\infty$ is equivalent to having $x^2P(|X|>x)=o(\ell(x))$ as $x\rightarrow \infty$, and thus also to having
$Z_n\stackrel{d}{\rightarrow} N(0,1)$ as $n\rightarrow \infty$. In a somewhat similar vein, $Z_n$ having a non-degenerate limiting distribution
when $X$ is in the domain of attraction of a stable law with index $\alpha \in (0,2)$ is equivalent to \bestar 1-F(x)+F(-x)\sim
\frac{2-\alpha}{\alpha} x^{-\alpha} h(x) \eestar and \bestar \frac{1-F(x)}{1-F(x)+F(-x)}\rightarrow p,~~\frac{F(-x)}{1-F(x)+F(-x)}\rightarrow
q\eestar as $x\rightarrow +\infty$,  where $p,q \ge 0, p+q=1$
 and $h(x)$ is slowly varying at $+\infty$ [cf. Theorem 1a in Feller(1971), page 313].
 Also, $X$ has moments of all orders less than $\alpha \in (0,2)$.
 The normalizing constants $A_n$ and $B_n$, in turn, are determined in a rather
complicated way by the slowly varying function $h$.

Now, in view of the results of Gin\'{e}, G\"{o}tze and Mason (1997) and Chistyakov and G\"{o}tze (2004), the problem of finding suitable
constants for $Z_n$ having a non-degenerate limit in distribution when $X$ is in the domain of attraction of a stable law with index $\alpha
\in (0,2]$ is eliminated via establishing the convergence in distribution of the self-normalized sums $S_n/V_n$ or, equivalently, that of
Student's statistic $T_n$, to a non-degenerate limit under the same necessary and sufficient conditions for $X$.

For $X$ symmetric, Griffin and Mason (1991) attribute to Roy Erickson a proof of the fact that having $S_n/V_n\stackrel{d}{\rightarrow}
N(0,1)$, as $n\rightarrow \infty$, does imply that $X \in \mbox{DAN}$. Gin\'{e}, G\"{o}tze and Mason (1997) proved the first such result for
the general case of not necessarily symmetric random variables (cf. their Theorem 3.3), which reads as follows.

\noindent {\bf Theorem A} {\it The following two statements are equivalent: \begin{enumerate}
\item[(a)]  $X\in \mbox{DAN}$ and $EX=0$;
\item[(b)]  $S_n/V_n \stackrel{d}{\rightarrow} N(0,1),~n\rightarrow \infty$.
\end{enumerate}}

Chistyakov and G\"{o}tze (2004), in turn, established the following global result (cf. their Theorem 1.1.) when $X$ has a stable law with index
$\alpha \in (0,2]$.

\noindent {\bf Theorem B} {\it The self-normalized sums $S_n/V_n$ converge weakly as $n\rightarrow \infty$ to a random variable $Z$ such that
$P(|Z|=1)<1$ if and only if
\begin{enumerate}
\item[(i)]  $X$ is in the domain of attraction of a stable law with index $\alpha\in (0,2]$;
\item[(ii)] $EX=0$ if $1< \alpha\le 2$;
\item[(iii)]  if $\alpha=1$, then $X$ is in the domain of attraction of Cauchy's law and Feller's condition holds, that is,
 $\lim\limits_{n\rightarrow \infty}n E\sin(X/a_n)$ exists and is finite, where
$
 a_n=\inf\{x>0: nx^{-2}EX^2I(|X|<x)\le 1\}.
 $
\end{enumerate}}

Moreover, Chistyakov and G\"{o}tze (2004) also proved (cf. their Theorem 1.2)
that the self-normalized sums $S_n/V_n$ converge weakly to a {\it
degenerate limit} $Z$ if and only if $P(|X|>x)$ is a slowly varying function at $+\infty$.

Also, in comparison to the Gin\'{e} {\it et al.} (1997) result of Theorem A
above that concludes the asymptotic  {\it standard} normality of
the sequence of self-normalized sums $S_n/V_n$ if and only if $X\in \mbox{DAN}$
 and $EX=0$, Theorem 1.4 of Chistyakov and G\"{o}tze (2004)
shows that $S_n/V_n$ is asymptotically normal if and only if $S_n/V_n$
is asymptotically {\it standard} normal.

We note in  passing that Theorem 3.3 of Gin\'{e} {\it et al.} (1997)
(cf. Theorem A) and the just mentioned Theorem 1.4 of Chistyakov and
G\"{o}tze (2004) confirm the long-standing conjecture of Logan, Mallows,
 Rice and Shepp (1973) (LMRS for short), stating in particular that ``
$S_n/V_n$ is asymptotically normal if (and perhaps only if) $X$ is in
 the domain of attraction of the normal law" (and $X$ is centered). And in
addition ``It seems worthy of conjecture that the only possible nontrivial
 limiting distributions of $S_n/V_n$ are those obtained when $X$
follows a stable law". Theorems 1.1 and 1.2 of Chistyakov and
G\"{o}tze (2004) (cf. Theorem B above and the paragraph right after) show that
this second part of the long-standing LMRS conjecture also holds if
one interprets nontrivial limit distributions as those, that are not
concentrated at the points $+1$ and $-1$.

The proofs of the results of  Chistyakov and G\"{o}tze (2004) (Theorems 1.1--1.7) are very demanding. They rely heavily on auxiliary results
from probability theory and complex analysis that are proved in their Section 3 on their own.

 As noted by Chistyakov and G\"{o}tze (2004), the ``if" part of their Theorem 1.1
(Theorem B above) follows from the results of LMRS as well, while the ``if" part of their Theorem 1.2 follows from Darling (1952). As described
in LMRS [cf. Lemma 2.4 in Chistyakov and G\"{o}tze (2004); see also Cs\"{o}rg\H{o} and Horv\'{a}th (1988), and S. Cs\"{o}rg\H{o} (1989)], the class
of limiting distributions for $\alpha\in (0,2)$ does not contain Gaussian ones. For more details on the lines of research that in view of LMRS
have led to Theorems A and B above, we refer to the respective introductions of Gin\'{e} {\it et al.} (1997) and Chistyakov and G\"{o}tze
(2004).

Further to the lines of research in hand, it has also become well established in the past twenty or so years that limit theorems for
self-normalized sums $S_n/V_n$ often require fewer, frequently much fewer, moment assumptions than those that are necessary for their classical
analogues [see, e.g. Shao (1997)]. All in all, the asymptotic theory of self-normalized sums has much extended the scope of the classical
theory. For a global overview of these developments we refer to the papers Shao (1998, 2004, 2010), Cs\"org\H{o} {\it et al.}  (2004), Jing
{\it et al.} (2008), and to the book de la Pe\~{n}a, Lai and Shao (2009).

 In view of,
  and inspired by, the Gin\'{e} {\it et al.} (1997) result of Theorem A above,
   Cs\"org\H{o}, Szyszkowicz and Wang (2003) established a self-normalized version of the weak invariance principle
  (sup-norm approximation in probability) under the same necessary and sufficient conditions.  Moreover,
   Cs\"org\H{o} {\it et al.} (2008) succeed in
  extending the latter weak invariance principle via weighted sup-norm and $L_p$-approximations, $0<p<\infty$, in probability, again
  under the same necessary and sufficient conditions.  In particular, for dealing with sup-norm approximations, let $Q$ be the class of positive
  functions $q(t)$ on $(0,1]$, i.e., $\inf_{\delta\leq t\leq 1} q(t)>0$ for $0<\delta<1$, which are nondecreasing near zero, and let
$$
I(q,c) := \int^1_{0+} t^{-1}\exp(-cq^2(t)/t)dt, \quad 0<c<\infty.
$$
Then [cf.\ Corollary 3 in Cs\"org\H{o} {\it et al.} (2008)], {\it on assuming that $q\in Q$, the following two statements are equivalent}:

\noindent (a) $X \in \mbox{DAN}$ {\it and} $EX=0$;

\noindent (b) {\it On an appropriate probability space for $X, X_1,X_2,\ldots$, one can construct a standard Wiener process $\{W(s),\, 0\leq
s<\infty\}$ so that, as $n\to\infty$,
\begin{equation}
\sup_{0<t\leq 1} \Big| S_{[nt]}/V_n - W(nt)/n^{1/2}\Big|\Big/ q(t) = o_P(1) \label{aeq1.3}
\end{equation}
if and only if $I(q,c) < \infty$ for all $c>0$. }

\medskip
With $q(t) = 1$ on $(0,1]$, this is Theorem 1 of Cs\"org\H{o} {\it et al.} (2003), and when $\sigma^2 = EX^2<\infty$, then  (\ref{aeq1.3})
combined with Kolmogorov's law of large numbers results in the classical weak invariance principle that in turn yields Donsker's classical
functional CLT.

This work was inspired by the Chistyakov and G\"{o}tze (2004) result of Theorem B above. Our main aim is to identify the limiting distribution
in the latter theorem under the same necessary and sufficient conditions in terms of weak convergence on $D[0,1]$ (cf. Theorem \ref{th1}).
 Our auxiliary Lemma \ref{prop1} may be viewed as a scalar normalized version of Theorem B (Theorem 1.1 of Chistyakov and G\"{o}tze (2004)).

\section{Main results}

An ${\bf R}$-valued stochastic process $\{X(t), t\ge 0\}$ is called a {\it L\'{e}vy process}, if the following four conditions are satisfied:

\begin{enumerate}
\item[(1)] it starts at the origin, i.e. $X(0)=0$ a.s.;
\item[(2)] it has independent increments, that is, for any choice of $n\ge 1$ and $0\le t_0<t_1<\cdots<t_n$, the random variables
$X(t_0),~X(t_1)-X(t_0),\cdots, X(t_n)-X(t_{n-1})$ are independent;
\item[(3)] it is time homogeneous, that is, the distribution of $\{X(t+s)-X(s): t\ge 0\}$ does not depends on $s$;
\item[(4)] as a function of $t$, $X(t,\omega)$ is a.s. right-continuous with left-hand limits.
\end{enumerate}

A {\it L\'{e}vy process} $\{X(t), t\ge 0\}$ is called {\it $\alpha$-stable} (with index $\alpha \in (0,2]$) if for any $a>0$, there exists some
$c\in {\bf R}$ such that $\{X(at)\}\stackrel{d}{=}\{a^{1/\alpha} X(t)+ct\}$. If $\{X(t), t\ge 0\}$ is an {\it  $\alpha$-stable L\'{e}vy
process}, then for any $t\ge 0$, ~$X(t)$ has a stable distribution. For more details about L\'{e}vy  and $\alpha$-stable L\'{e}vy processes, we
refer to Bertoin (1996) and Sato (1999).

It is well known that $G$ is a stable distribution with index $\alpha \in (0,2]$ if and only if  its characteristic function
$f(t)=\int_{-\infty}^{\infty} e^{itx}dG(x)$ admits the representation (see for instance Feller(1971))\be
f(t)=\left\{\begin{array}{ll}\exp\Big\{i\gamma t+c|t|^{ \alpha}\frac{\Gamma(3-\alpha)}{\alpha(\alpha-1)}\Big[\cos \frac{\pi\alpha}{2}+
i(p-q)\frac{t}{|t|}\sin
\frac{\pi\alpha}{2} \Big]\Big\}, & \mbox{~if~}\alpha \ne 1;\\
\exp\Big\{i\gamma t-c|t|\Big[ \frac{\pi}{2}+ i(p-q)\frac{t}{|t|} \log |t|]\Big\}, & \mbox{~if~}\alpha=1,
\end{array}\right.\label{hua1}\ee
where  $c, p,q, \gamma$  are real constants with $c, p,q \ge 0,~p+q=1$. Write $G\sim S(\alpha, \gamma, c, p, q)$ and, as in Theorem B, let
\bestar a_n=\inf\{x>0: nx^{-2}EX^2I(|X|<x)\le 1\}.\eestar

The following result is our main theorem.

\begin{thm} \label{th1} Let $X, X_1,X_2,\cdots$ be a sequence of i.i.d. non-degenerate random variables  and
let $G\sim S(\alpha, \gamma, c, p, q)$. If $X$ is in the domain of attraction of $G$ of index $\alpha\in (0,2]$, with $EX=0$ if $1<\alpha\le 2$
and
 $\lim\limits_{n\rightarrow \infty}n E\sin(X/a_n)$ exists and is finite if $\alpha=1$, then, as $n \rightarrow \infty$, we have
 $$\frac{S_{[nt]}}{V_n} \stackrel{d}{\rightarrow} \frac{X(t)}{\sqrt{[X]_1}}$$  on $D[0,1]$, equipped with the Skorokhod $J_1$ topology,
where $X(t)$ is an $\alpha$-stable L\'{e}vy process of index $\alpha\in (0,2]$ on $[0,1]$,~ $X(1) \sim S(\alpha, \gamma', 1, p, q)$ with
$\gamma'=0$ if $\alpha\ne 1$ and
 $\gamma'=\lim\limits_{n\rightarrow \infty}n E\sin(X/a_n)$ if $\alpha=1$,
 and $[X]_t$ is the quadratic variation of $X(t)$.
\end{thm}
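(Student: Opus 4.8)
\medskip\noindent
The plan is to pass from the self-normalized process to a pair of scalar-normalized partial-sum processes and to finish by a continuous-mapping argument. Writing
$$
\frac{S_{[nt]}}{V_n}=\frac{S_{[nt]}/a_n}{\sqrt{V_n^2/a_n^2}},\qquad 0\le t\le 1,
$$
and using that $V_n$ does not depend on $t$, it suffices to prove that, as $n\to\infty$,
$$
\Big(\frac{S_{[nt]}}{a_n}\,,\ \frac{V_n^2}{a_n^2}\Big)\ \stackrel{d}{\rightarrow}\ \big(X(t)\,,\ [X]_1\big)\quad\text{in }D[0,1]\times{\bf R},
$$
with $D[0,1]$ carrying the $J_1$ topology, and then to apply the continuous mapping theorem to the map $(f,v)\mapsto f/\sqrt v$. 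This map is continuous at every $(f,v)$ with $v>0$, because if $v_n\to v>0$ then $f_n/\sqrt{v_n}\to f/\sqrt v$ in $J_1$ whenever $f_n\to f$ in $J_1$ (multiplying a c\`adl\`ag path by a convergent sequence of positive reals is $J_1$-continuous); and the limit satisfies $[X]_1=\sum_{0<s\le 1}(\Delta X(s))^2\in(0,\infty)$ a.s., precisely because $X$ is non-degenerate, so the limit lies a.s.\ in the continuity set of the map.

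\medskip\noindent
I would in turn derive the displayed convergence above from the bivariate functional limit
$$
\Big(\frac{S_{[nt]}}{a_n}\,,\ \frac{V_{[nt]}^2}{a_n^2}\Big)_{0\le t\le 1}\ \stackrel{d}{\rightarrow}\ \big(X(t)\,,\ [X]_t\big)_{0\le t\le 1}\quad\text{in }D([0,1],{\bf R}^2),
$$
again under $J_1$, by evaluating the second coordinate at $t=1$; this evaluation is $J_1$-continuous since every homeomorphism of $[0,1]$ fixes the endpoint, so $g_n(1)\to g(1)$ whenever $g_n\to g$ in $J_1$. To prove the bivariate limit, put $\xi_i=(X_i,X_i^2)\in{\bf R}^2$; these are i.i.d.\ and $(S_{[nt]},V_{[nt]}^2)=\sum_{i=1}^{[nt]}\xi_i$ is an ${\bf R}^2$-valued random walk, normalized by the diagonal matrices $\mathrm{diag}(a_n^{-1},a_n^{-2})$. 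The fixed-time convergence $(S_n/a_n,V_n^2/a_n^2)\stackrel{d}{\rightarrow}(X(1),[X]_1)$ is what our Lemma~\ref{prop1} — the scalar-normalized form of Theorem~B — supplies; it is there, via the hypotheses on $\alpha$ ($EX=0$ for $1<\alpha\le 2$, Feller's condition for $\alpha=1$), that the joint law of the limit and the identity $X(1)\sim S(\alpha,\gamma',1,p,q)$ are pinned down, with the single sequence $a_n$ serving both coordinates. Convergence of the finite-dimensional distributions of the rescaled process then follows from this together with the independence and stationarity of the increments of the random walk and the regular variation of $a_n$ with index $1/\alpha$ (so that $a_{[nt]}/a_n\to t^{1/\alpha}$), the limit being a process with independent, time-homogeneous increments, i.e.\ the $\alpha$-stable L\'evy process $X(t)$ together with the pure-jump increasing process $t\mapsto\sum_{0<s\le t}(\Delta X(s))^2$. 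Tightness in $D([0,1],{\bf R}^2)$ under $J_1$ follows from the standard criteria for partial sums of i.i.d.\ vectors attracted to an (operator-)stable law. Finally, a jump $X_i$ of the first coordinate at time $i/n$ occurs simultaneously with the jump $X_i^2$ of the second, so $\Delta[X]_s=(\Delta X(s))^2$ in the limit; this both forces the convergence to hold in the genuine $J_1$ topology of $D([0,1],{\bf R}^2)$ (not merely coordinatewise) and identifies the second limit coordinate as the quadratic variation $[X]_t$ of $X(t)$. When $\alpha=2$ the second coordinate degenerates to $[X]_t=t$, with $X(\cdot)$ a standard Brownian motion and $V_n^2/a_n^2\to 1$ in probability, and the assertion reduces to the functional result behind (\ref{aeq1.3}); I would treat that regime as a consistency check rather than reprove it.

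\medskip\noindent
The main obstacle is the bivariate limit of the previous paragraph, in two parts: (i) the fixed-time convergence $(S_n/a_n,V_n^2/a_n^2)\stackrel{d}{\rightarrow}(X(1),[X]_1)$ with the \emph{common} normalization $a_n$ and the correct dependence between $X(1)$ and $[X]_1$, handled uniformly over the three regimes $\alpha\in(0,1)$, $\alpha=1$ (where Feller's condition is the delicate point) and $\alpha\in(1,2]$ — this is where Theorem~B and Lemma~\ref{prop1} do the substantive work; and (ii) the $J_1$-tightness of the rescaled bivariate random walk, which amounts to controlling the modulus of continuity of $\sum_{i\le[nt]}(X_i,X_i^2)$ uniformly in $n$, essentially a one-dimensional estimate on the truncated moments of $X$. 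Everything downstream — the evaluation at $t=1$ and the final division — is soft.
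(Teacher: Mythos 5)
Your overall architecture --- establish the joint convergence of $\bigl(S_{[nt]}/a_n,\ V_n^2/a_n^2\bigr)$ and then divide via the continuous mapping theorem --- is sound, and the soft steps (continuity of $(f,v)\mapsto f/\sqrt{v}$ at points with $v>0$, $J_1$-continuity of evaluation at $t=1$, a.s.\ positivity of $[X]_1$) are fine. But there is a genuine gap at the one place where you declare the substantive work done: you assert that the fixed-time joint convergence $(S_n/a_n,\,V_n^2/a_n^2)\stackrel{d}{\rightarrow}(X(1),[X]_1)$ ``is what Lemma~\ref{prop1} supplies.'' It is not. Lemma~\ref{prop1} is a statement about the single sequence $S_n/A_n$; it says nothing about $V_n^2/a_n^2$, let alone about the joint law of the pair, and Theorem~B cannot supply it either, being a statement about the ratio $S_n/V_n$ only. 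The dependence between $X(1)$ and $[X]_1$ --- for $\alpha<2$ both are functionals of the same Poisson point process of jumps --- is precisely what has to be proved, and it does not follow from the two marginal convergences. The paper itself flags this joint convergence, its display (\ref{last1}), as an extension of Raikov's theorem in its own right. Without an argument for it (e.g.\ via convergence of the empirical point processes $\sum_i\delta_{X_i/a_n}$ to the Poisson process of jumps of $X$ plus a truncation step to sum the squares, or via the exchangeability machinery the paper uses), your proof does not close; essentially all of the difficulty of the theorem lives in this step.

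For comparison, the paper avoids the bivariate random walk route entirely. It represents the limiting L\'evy process through Kallenberg's canonical representation for exchangeable processes, proves the triple convergence $\bigl(S_n/a_n,\ \sum_i X_i^2/a_n^2,\ \sum_i\delta_{X_i/a_n}\bigr)\stackrel{vd}{\longrightarrow}\bigl(X(1),[X]_1,\sum_j\delta_{\beta_j}\bigr)$ by applying Kallenberg's Theorems 3.8 and 3.13 (2005) to the exchangeable array $\{X_i/a_n\}$, divides through by $V_n$ at the level of this triple, and then applies the same exchangeability theorems a second time to the array $\{X_i/V_n\}$ to obtain the functional limit for $S_{[nt]}/V_n$ directly, rather than dividing a functional limit by a scalar as you propose. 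If you want to keep your route, the missing joint convergence is exactly the paper's display (\ref{all1}) (equivalently Corollary~\ref{th2}); you would need to import or prove it. A secondary point: your formula $[X]_1=\sum_{0<s\le 1}(\Delta X(s))^2$ omits the continuous part $\sigma^2$, which is present precisely in the case $\alpha=2$ that you defer to a consistency check; the correct expression is (\ref{add3}).
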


\vskip 0.3cm

 When $\alpha=2$, $G$ is a normal distribution,~ $X(1)\stackrel{d}{=} N(0, 1)$ and $[X]_1=1$.
 Consequently, ${X(t)}/{\sqrt{[X]_1}}$ is a standard Brownian motion
 and thus  we obtain the weak convergence of $S_{[nt]}/V_n$ to a Brownian motion as in (c) of  Theorem 1 of  Cs\"{o}rg\H{o} {\it et al.} (2003)
 [see also our lines right after (\ref{aeq1.3})].

Consider now the sequence $T_{n,t}$ of Student processes in $t\in [0,1]$ on $D[0,1]$, defined as \be \{T_{n,t}(X),~0\le t\le 1\}:&=&\left\{
 \frac{(1/\sqrt{n})\sum_{i=1}^{[nt]} X_i}{\Big((1/(n-1))\sum_{i=1}^n (X_i-\bar{X}_n)^2\Big)^{1/2}},~0\le t\le 1\right\}\nonumber\\
&=& \left\{\frac{S_{[nt]}/V_n}{\sqrt{(n-(S_n/V_n)^2)/(n-1)}},~0\le t\le 1\right\}\, .\ee

Clearly, $T_{n,1}(X)=T_n(X)$, with the latter as in (\ref{deft}). Clearly also, in view of Theorem \ref{th1}, the same result continues to hold
true under the same conditions for the Student process $T_{n,t}$ as well, i.e., Theorem \ref{th1} can be restated in terms of the latter
process. Moreover, if $1< \alpha \le 2$, then $EX =: \mu$ exists and the following corollary obtains.

\begin{cor} \label{cor1} Let $X, X_1,X_2,\cdots$ be a sequence of i.i.d. non-degenerate random variables  and
let $G\sim S(\alpha, \gamma, c, p, q)$. If $X$ is in the domain of attraction of $G$ of index $\alpha\in (1,2]$, then, as $n \rightarrow
\infty$, we have
 $$T_{n,t}(X-\mu)= \frac{(1/\sqrt{n})\sum_{i=1}^{[nt]} (X_i-\mu)}{\Big((1/(n-1))\sum_{i=1}^n (X_i-\bar{X}_n)^2\Big)^{1/2}}
 \stackrel{d}{\rightarrow} \frac{X(t)}{\sqrt{[X]_1}}$$
 on $D[0,1]$, equipped with the Skorokhod $J_1$ topology,
where $X(t)$ is an $\alpha$-stable L\'{e}vy process of index $\alpha\in (1,2]$ on $[0,1]$,~ $X(1) \sim S(\alpha, 0, 1, p, q)$,
 and $[X]_t$ is the quadratic variation of $X(t)$.
\end{cor}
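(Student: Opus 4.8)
\medskip\noindent
The plan is to deduce Corollary \ref{cor1} from Theorem \ref{th1} applied to the centred sequence $Y_i:=X_i-\mu$, $i\ge1$, where $\mu=EX$ exists because $\alpha\in(1,2]$. First I would observe that membership in the domain of attraction of a stable law of a given index, as well as the norming sequence $a_n$, is unaffected by a deterministic shift: for $\alpha\in(1,2)$ this is because the two-sided tail behaviour $P(X>x)\sim c_+x^{-\alpha}L(x)$, $P(X<-x)\sim c_-x^{-\alpha}L(x)$ with $L$ slowly varying is preserved by $X\mapsto X-\mu$, and for $\alpha=2$ because the condition that $\ell(x)=EX^2I(|X|\le x)$ be slowly varying is shift-invariant. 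Hence $Y$ is in the domain of attraction of a stable law with the same triple $(\alpha,p,q)$, and since also $EY=0$ the hypotheses of Theorem \ref{th1} hold for $Y$ (the case $\alpha=1$, with its Feller condition, does not occur here), so Theorem \ref{th1} applies to $Y$ with limiting process $X(t)/\sqrt{[X]_1}$, $X(1)\sim S(\alpha,0,1,p,q)$ (here $\gamma'=0$ since $\alpha\ne1$). Writing $\wt V_n^2:=\sum_{i=1}^n Y_i^2=\sum_{i=1}^n(X_i-\mu)^2$ and $\wt S_n:=\sum_{i=1}^n Y_i$, this gives
\begin{equation}\label{eqcor1}
\xi_n(t):=\frac{\sum_{i=1}^{[nt]}(X_i-\mu)}{\wt V_n}\stackrel{d}{\rightarrow}\frac{X(t)}{\sqrt{[X]_1}}\qquad\text{on }D[0,1].
\end{equation}

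The next step is to absorb the difference between $\wt V_n$ and the ``empirical'' normalizer appearing in $T_{n,t}$. Since $\bar X_n-\mu=\wt S_n/n$, the identity $\sum_{i=1}^n(X_i-\bar X_n)^2=\wt V_n^2-n(\bar X_n-\mu)^2=\wt V_n^2\big(1-\tfrac1n(\wt S_n/\wt V_n)^2\big)$ holds, and a direct computation yields, on the event $\{\wt V_n>0\}$,
\begin{equation}\label{eqcor2}
T_{n,t}(X-\mu)=\xi_n(t)/c_n,\qquad c_n:=\Big(\tfrac{n}{n-1}\Big)^{1/2}\Big(1-\tfrac1n\big(\wt S_n/\wt V_n\big)^2\Big)^{1/2}.
\end{equation}
By the Cauchy--Schwarz inequality $|\wt S_n/\wt V_n|\le n^{1/2}$, so $c_n$ is well defined and lies in $(0,(n/(n-1))^{1/2}]$; and since $\wt S_n/\wt V_n$ converges in distribution (by Theorem B applied to $Y$, or by the $t=1$ marginal of \eqref{eqcor1}) it is $O_P(1)$, whence $n^{-1}(\wt S_n/\wt V_n)^2=o_P(1)$ and therefore $c_n\stackrel{P}{\rightarrow}1$.

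Finally I would combine \eqref{eqcor1}, \eqref{eqcor2} and $c_n\stackrel{P}{\rightarrow}1$ by a Slutsky-type argument on $D[0,1]$. Since the scalar limit is the constant $1$, the pair $(\xi_n,c_n)$ converges weakly to $(X(\cdot)/\sqrt{[X]_1},1)$ in $D[0,1]\times{\bf R}$ equipped with the product of the $J_1$ and usual topologies, and the map $(f,a)\mapsto a^{-1}f$ is $J_1$-continuous at every point with $a\ne0$ (a single time-change realizing $f_n\to f$ in $J_1$ also realizes $a_n^{-1}f_n\to a^{-1}f$). The continuous mapping theorem, together with $P(\wt V_n>0)\to1$, then gives $T_{n,t}(X-\mu)=c_n^{-1}\xi_n(t)\stackrel{d}{\rightarrow}X(t)/\sqrt{[X]_1}$ on $D[0,1]$, which is the assertion of the corollary. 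The only point I expect to require genuine care is this last step --- passing from $c_n\stackrel{P}{\rightarrow}1$ for the scalar correction to convergence of the randomly rescaled process in the $J_1$ topology; the remainder is the shift-invariance of domains of attraction and the elementary algebra behind \eqref{eqcor2}.
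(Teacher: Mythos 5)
Your proof is correct and is essentially the argument the paper intends: the paper gives no written proof of Corollary \ref{cor1}, asserting only (via the Efron-type equivalence of $S_n/V_n$ and $T_n$ noted after (\ref{deft})) that it follows from Theorem \ref{th1} applied to the centred variables. Your write-up supplies exactly the details the paper omits --- shift-invariance of the domain of attraction for $\alpha\in(1,2]$, the algebraic identity relating $\sum_{i=1}^n(X_i-\bar X_n)^2$ to $\sum_{i=1}^n(X_i-\mu)^2$, and the Slutsky step for the scalar correction $c_n\stackrel{P}{\rightarrow}1$ in the $J_1$ topology --- and each step is sound.
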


As noted earlier, with $\alpha=2$, ${X(t)}/{\sqrt{[X]_1}}$ is a standard Brownian motion. Moreover, in the latter case, we have
 $(X-\mu) \in \mbox{DAN}$ and this, in turn, is equivalent to having (\ref{aeq1.3}) with $T_{n,t}(X-\mu)$ as well, instead of $S_{[nt]}/V_n$
 [cf. Corollary 3.5 in Cs\"{o}rg\H{o} {\it et al.} (2004)].

 Corollary \ref{cor1} extends the feasibility of the use of the Student process $T_{n,t}(X-\mu)$ for constructing functional asymptotic
 confidence intervals for $\mu$, along the lines of Martsynyuk (2009a, b), beyond $X-\mu$ being in the domain of attraction of the normal law.

Via the proof of Theorem \ref{th1} we can also get a weak convergence result when $X$ belongs to the domain of partial attraction of an
 infinitely divisible law (cf. Feller (1971), page 590).
\begin{thm} \label{prop2}
Let $X(t)$ be a L\'{e}vy process with  $[X]_1\ne 0$, where $[X]_t$ is the quadratic variation of $X(t)$. If there exist some positive constants
$\{b_n\}$ and some subsequence $\{m_n\}$, where $m_n\rightarrow \infty$ as $n\rightarrow \infty$, such that
$S_{m_n}/b_n\stackrel{d}{\rightarrow} X(1)$ as $n \rightarrow \infty$, then ${S_{[m_nt]}}/{V_{m_n}} \stackrel{d}{\rightarrow}
{X(t)}/{\sqrt{[X]_1}}$ on $D[0,1]$, equipped with the Skorokhod $J_1$ topology.
\end{thm}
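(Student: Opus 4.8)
The plan is to deduce the claim from a single bivariate functional limit theorem, applied via the continuous mapping theorem. Writing
$$\frac{S_{[m_nt]}}{V_{m_n}}=\frac{S_{[m_nt]}/b_n}{\sqrt{V_{m_n}^2/b_n^2}},$$
and noting that $V_{m_n}^2/b_n^2$ is the value at $t=1$ of the process $t\mapsto V_{[m_nt]}^2/b_n^2$, it suffices to prove that, as $n\to\infty$,
$$\Big(\frac{S_{[m_nt]}}{b_n},\ \frac{V_{[m_nt]}^2}{b_n^2}\Big)_{0\le t\le1}\ \stackrel{d}{\rightarrow}\ \big(X(t),[X]_t\big)_{0\le t\le1}$$
on $D([0,1],{\bf R}^2)$ with the Skorokhod $J_1$ topology, and then to apply the continuous mapping theorem to the map $h:(f,g)\mapsto f/\sqrt{g(1)}$. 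Since $J_1$ time changes fix the right endpoint $t=1$, convergence $(f_k,g_k)\to(f,g)$ in $J_1$ forces $g_k(1)\to g(1)$; hence, dividing a $J_1$-convergent sequence of functions by a convergent sequence of positive reals, $h$ is continuous at every $(f,g)$ with $g(1)>0$. As $[X]_t=\sigma^2 t+\sum_{s\le t}(\Delta X(s))^2\ge0$ and $[X]_1\ne0$ by hypothesis, we have $[X]_1>0$ a.s., so $h$ is a.s.\ continuous at the limit process $\big(X(t),[X]_t\big)_{0\le t\le1}$, and $S_{[m_nt]}/V_{m_n}\stackrel{d}{\rightarrow}X(t)/\sqrt{[X]_1}$ follows.

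For the bivariate functional convergence I would argue exactly as in the proof of Theorem~\ref{th1} --- finite-dimensional convergence together with $J_1$-tightness --- applied now to the row-wise i.i.d.\ array $\xi_{n,k}:=\big(X_k/b_n,\,X_k^2/b_n^2\big)$, $1\le k\le m_n$ (equivalently, this is the classical functional limit theorem for null arrays). What makes the ${\bf R}^2$-valued $J_1$ statement legitimate is that $X_k^2/b_n^2=(X_k/b_n)^2$, so the two coordinates of the partial-sum process jump at the same indices, with jump vectors on the parabola $\{(x,x^2):x\in{\bf R}\}$; the limiting bivariate L\'{e}vy process is then $\big(X(t),[X]_t\big)$, whose jumps are $\big(\Delta X(s),(\Delta X(s))^2\big)$. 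With this reduction, everything comes down to the convergence of the row sums,
$$\Big(\frac{S_{m_n}}{b_n},\ \frac{V_{m_n}^2}{b_n^2}\Big)\ \stackrel{d}{\rightarrow}\ \big(X(1),[X]_1\big),\qquad n\to\infty.$$

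To prove this last display I would first observe that $b_n\to\infty$: otherwise $b_n$ stays bounded along a subsequence, forcing $S_{m_n}$ to converge in distribution there, which is impossible for sums of $m_n\to\infty$ i.i.d.\ non-degenerate summands. Hence $\{X_k/b_n:k\le m_n\}$ is uniformly asymptotically negligible, and by the Gnedenko--Kolmogorov convergence criterion [Gnedenko and Kolmogorov (1968)] the hypothesis $S_{m_n}/b_n\stackrel{d}{\rightarrow}X(1)$ is \emph{equivalent} to the three canonical conditions attached to the L\'{e}vy--Khintchine triplet $(\beta,\sigma^2,\nu)$ of $X(1)$: (i) $m_nP(X_1/b_n\in\cdot)\to\nu$ vaguely on ${\bf R}\setminus\{0\}$; (ii) $\lim_{\ep\downarrow0}\limsup_n\big|\,m_n\,\mathrm{Var}\big(X_1/b_n;|X_1|\le\ep b_n\big)-\sigma^2\,\big|=0$; (iii) $m_nE\big[(X_1/b_n)\,I(|X_1|\le b_n)\big]\to\beta$, where $\mathrm{Var}(Y;A):=E[Y^2I(A)]-(E[Y\,I(A)])^2$. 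I would then verify the corresponding conditions for $\xi_{n,1}$. Its law is the image of the law of $X_1/b_n$ under $g(x)=(x,x^2)$, which is a proper continuous map of ${\bf R}\setminus\{0\}$ onto the punctured parabola, so vague convergence is preserved and $m_nP(\xi_{n,1}\in\cdot)\to\nu\circ g^{-1}$, precisely the L\'{e}vy measure of $\big(X(1),[X]_1\big)$. For the truncated-variance (Gaussian) part: on the $\ep$-ball about the origin in ${\bf R}^2$ one has $|x|\le\ep$ (as $x^2+x^4\le\ep^2$ implies $|x|\le\ep$), so the second-coordinate contribution is at most $m_nE\big[(X_1/b_n)^4I(|X_1|\le\ep b_n)\big]\le\ep^2\,m_nE\big[(X_1/b_n)^2I(|X_1|\le\ep b_n)\big]\to0$ as $\ep\downarrow0$, since the last expectation stays bounded --- indeed, splitting at level $\ep$ and using (ii), (iii) and vague convergence, it converges to $\sigma^2+\int_{0<|x|\le1}x^2\,\nu(dx)$, the cross term $m_n\big(E[(X_1/b_n)I(|X_1|\le\ep b_n)]\big)^2$ being $O(1/m_n)$; the off-diagonal entry of the truncated covariance is controlled by the Cauchy--Schwarz inequality. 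Thus the limiting Gaussian covariance matrix of the array is $\mathrm{diag}(\sigma^2,0)$ --- as it must be, $[X]_t-\sigma^2 t$ being a subordinator with no Gaussian component --- and the limiting truncated drift of the second coordinate is $\sigma^2+\int_{0<|x|\le1}x^2\,\nu(dx)$, exactly the drift of $[X]_1$ written with truncation at level $1$. Together with (i)--(iii) for the first coordinate, these are the Gnedenko--Kolmogorov conditions identifying the row-sum limit as $\big(X(1),[X]_1\big)$, completing the argument.

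The main obstacle is this last step: matching the complete set of canonical conditions for the bivariate array $\{\xi_{n,k}\}$ with the L\'{e}vy--Khintchine triplet of $\big(X(1),[X]_1\big)$ --- in particular, keeping the truncation conventions in (iii) consistent, and verifying that the second coordinate acquires no Gaussian component, which rests on $E\big[(X_1/b_n)^4I(|X_1|\le\ep b_n)\big]$ being negligible relative to $E\big[(X_1/b_n)^2I(|X_1|\le\ep b_n)\big]$. The reduction to a bivariate functional limit theorem and the concluding division step via the continuous mapping theorem are comparatively routine.
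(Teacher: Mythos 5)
Your proof is correct in outline, and the details you flag as ``bookkeeping'' do go through; but it takes a genuinely different route from the paper's. The paper proves Theorem~\ref{prop2} by declaring it similar to the proof of Theorem~\ref{th1}, whose engine is Kallenberg's theory of exchangeable arrays: one first obtains the joint convergence $(S_n/a_n,\,V_n^2/a_n^2,\,\sum_i\delta_{X_i/a_n})\to(X(1),[X]_1,\sum_j\delta_{\beta_j})$, divides through by $V_n$, and then --- this is the key trick --- observes that $\{X_i/V_n\}_{i\le n}$ is still an exchangeable (though no longer independent) array for each $n$, so that Kallenberg's convergence criteria for partial-sum processes of exchangeable arrays (Theorems 3.8 and 3.13 of Kallenberg (2005)) apply directly to the self-normalized process and yield $S_{[nt]}/V_n\stackrel{d}{\rightarrow}X(t)/\sqrt{[X]_1}$. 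You instead keep the deterministic norming $b_n$ until the very end: you prove the bivariate functional limit $(S_{[m_nt]}/b_n,\,V_{[m_nt]}^2/b_n^2)\stackrel{d}{\rightarrow}(X(t),[X]_t)$ in $D([0,1],{\bf R}^2)$ by checking the Gnedenko--Kolmogorov canonical conditions for the planar null array $(X_k/b_n,X_k^2/b_n^2)$ (L\'{e}vy measure $\nu\circ g^{-1}$ carried by the parabola; no Gaussian component in the second coordinate, via the $\ep^2$-domination of the truncated fourth moment by the truncated second moment), and only then self-normalize through the map $(f,g)\mapsto f/\sqrt{g(1)}$, which is a.s.\ continuous at the limit because $J_1$-time-changes fix $t=1$ and $[X]_1>0$ a.s. What each approach buys: yours is more elementary and self-contained --- classical triangular-array theory plus the continuous mapping theorem, with no exchangeability input --- at the price of the triplet computations; the paper's is shorter once Kallenberg's machinery is granted, and it produces as a by-product the point-process component $\sum_i\delta_{X_i/a_n}\to\sum_j\delta_{\beta_j}$ needed for Corollary~\ref{th2}, which your bivariate reduction does not give. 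One caveat common to both arguments: the hypothesis $[X]_1\ne 0$ has to be read as $P([X]_1=0)=0$; for a compound Poisson limit, $[X]_1$ vanishes with positive probability and $X(t)/\sqrt{[X]_1}$ is not even defined on that event.
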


As will be seen, in the proof of  Theorem \ref{th1}  and \ref{prop2}  we make use of a weak convergence result for sums of  {\it exchangeable
random variables}. For any finite or infinite sequence $\xi=(\xi_1,\xi_2,\cdots)$, we say $\xi$ is exchangeable if
$$(\xi_{k_1},\xi_{k_2},\cdots)\stackrel{d}{=}(\xi_1,\xi_2,\cdots)$$
for any finite permutation $(k_1,k_2,\cdots)$ of ${\bf N}$. A {\it process} $X(t)$ on $[0,1]$ is {\it exchangeable} if it is continuous in
probability with $X_0=0$ and has exchangeable increments over any set of disjoint intervals of equal length. Clearly, a L\'{e}vy process is
exchangeable.

 By using
the  notion of exchangeability, we can get the following corollary from the proof of Theorem \ref{th1}.

\begin{cor} \label{th2}
Let $X,X_1,X_2, \cdots$ and $G$ be as in Theorem \ref{th1}. If $X$ is in the domain of attraction of $G$ of index $\alpha\in (0,2]$, with
$EX=0$ if $1<\alpha\le 2$ and
 $\lim\limits_{n\rightarrow \infty}n E\sin(X/a_n)$ exists and is finite if $\alpha=1$, then, as $n \rightarrow \infty$,
\be \Big(\frac{S_n}{a_n}, \frac{V_n^2}{a_n^2}, \frac{\max_{1\le i\le n}|X_i|}{a_n}\Big)\stackrel{d}{\rightarrow} (X(1), [X]_1, J), \label{3in1}
\ee
 where,  with $\Delta X(t):=X(t)-X(t-)$, ~$J=\max \{|\Delta X(t)|: 0\le t\le 1\}$ is the biggest jump of $X(t)$ on $[0,1]$, where, as in Theorem
 \ref{th1}, $X(t)$ is an $\alpha$-stable L\'{e}vy process with index $\alpha\in (0,2]$ on $[0,1]$, $X(1) \sim S(\alpha, \gamma', 1, p, q)$
as specified in Theorem \ref{th1}, and $[X]_t$ is the quadratic variation of $X(t)$.
\end{cor}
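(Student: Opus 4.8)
\emph{Plan of proof.} The plan is to read off $(\ref{3in1})$ from the proof of Theorem~\ref{th1}. That proof realizes the self-normalized process $S_{[nt]}/V_n$, conditionally on the unordered sample $\{X_1,\dots,X_n\}$---equivalently on the point measure $N_n:=\sum_{i=1}^n\delta_{X_i/a_n}$---as a partial-sum process of a finite exchangeable sequence, and then applies the exchangeability-based weak convergence result, whose directing random measure in this setting is $N_n$. Its output is not merely the one-coordinate conclusion of Theorem~\ref{th1} but the coupled joint convergence $(S_{[nt]}/V_n,\,N_n)\stackrel{d}{\rightarrow}(X(t)/\sqrt{[X]_1},\,N)$ on $D[0,1]\times M_p$, where $M_p$ denotes the space of Radon point measures on $\bar{\mathbf R}\setminus\{0\}=[-\infty,\infty]\setminus\{0\}$ with the vague topology, $N:=\sum_{0\le s\le 1,\,\Delta X(s)\ne 0}\delta_{\Delta X(s)}$ is the jump point process of the \emph{same} L\'evy process $X(\cdot)$ appearing in the first coordinate, and $[X]_1=\int y^2\,N(dy)$. (For $\alpha=2$ the process is continuous, $N\equiv 0$, $[X]_1=1$, and this case is treated separately below.) The marginal $N_n\stackrel{d}{\rightarrow}N$ is the classical consequence of the regular variation of the tail of $X$ together with the i.i.d.\ structure, and is exactly the directing random measure that the exchangeability machinery couples to the process; I would record this coupled statement directly from the proof of Theorem~\ref{th1}.

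Granting that, the three coordinates of $(\ref{3in1})$ are obtained by applying continuous maps to the limiting pair and invoking the continuous mapping theorem. First, since $\nu(\{|y|\ge\lambda\})<\infty$ for every $\lambda>0$ and $\nu$ is non-atomic, $N$ has a.s.\ only finitely many atoms of modulus $\ge\lambda$, none at $\pm\infty$, and a unique atom of largest modulus, so $\mu\mapsto\sup\{|y|:y\in\mathrm{supp}\,\mu\}$ is a.s.\ continuous at $N$; hence $\max_{1\le i\le n}|X_i|/a_n=\sup\{|y|:y\in\mathrm{supp}\,N_n\}\stackrel{d}{\rightarrow}\sup\{|y|:y\in\mathrm{supp}\,N\}=\max_{0\le s\le 1}|\Delta X(s)|=J$. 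Second, $V_n^2/a_n^2=\int y^2\,N_n(dy)$; since $y\mapsto y^2$ is unbounded near $0$ I truncate: for each $\ep>0$, $\mu\mapsto\int y^2 I(|y|>\ep)\,\mu(dy)$ is a.s.\ continuous at $N$ (as $\nu$ charges neither $\{|y|=\ep\}$ nor $\pm\infty$), giving $\int y^2 I(|y|>\ep)\,N_n(dy)\stackrel{d}{\rightarrow}\int y^2 I(|y|>\ep)\,N(dy)\uparrow[X]_1$ as $\ep\downarrow 0$, while the leftover $\sum_{i=1}^n(X_i/a_n)^2 I(|X_i/a_n|\le\ep)$ has mean $n\,E[(X/a_n)^2 I(|X|\le\ep a_n)]$, which by Karamata's theorem (the truncated second moment $EX^2I(|X|\le x)$ is regularly varying of index $2-\alpha\in(0,2)$) converges to $\int_{|y|\le\ep}y^2\,\nu(dy)\to 0$ as $\ep\downarrow 0$; thus the truncation error is negligible uniformly in $n$. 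Third, $x\mapsto x(1)$ is continuous at paths continuous at $t=1$ and $X(\cdot)$ is a.s.\ so, hence, combining with the second step, $S_n/a_n=(S_n/V_n)\sqrt{V_n^2/a_n^2}\stackrel{d}{\rightarrow}(X(1)/\sqrt{[X]_1})\sqrt{[X]_1}=X(1)$. A routine Slutsky-and-$\ep\to 0$ argument assembles these three limits into $(\ref{3in1})$, and its joint form is automatic since all three functionals act on the one converging object of the first paragraph.

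For $\alpha=2$, $(\ref{3in1})$ reads $(S_n/a_n,\,V_n^2/a_n^2,\,\max_{1\le i\le n}|X_i|/a_n)\stackrel{d}{\rightarrow}(N(0,1),1,0)$: the first limit is the CLT for $X\in\mbox{DAN}$ (equivalently Theorem~A), $V_n^2/a_n^2\rightarrow 1$ in probability is a standard estimate for $X\in\mbox{DAN}$, and $P(\max_{1\le i\le n}|X_i|>\ep a_n)\le nP(|X|>\ep a_n)\rightarrow 0$ because $x^2P(|X|>x)=o(\ell(x))$ characterizes $\mbox{DAN}$; Slutsky then yields the joint statement.

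I expect the main obstacle to be the coupled joint convergence asserted in the first paragraph---that $S_{[nt]}/V_n$ and $N_n$ converge \emph{together}, with the limiting jump process $N$ being that of the very L\'evy process $X(\cdot)$ appearing in the limit of $S_{[nt]}/V_n$ rather than an independent copy. The first coordinate alone is Theorem~\ref{th1}, and $N_n\stackrel{d}{\rightarrow}N$ in isolation is classical, but it is the coupling of the two that must be extracted from the exchangeable-increments framework underlying the proof of Theorem~\ref{th1}---precisely what the phrase ``by using the notion of exchangeability'' in the statement is pointing at. The remaining ingredients---a.s.\ continuity of the largest-modulus functional at a Poisson configuration, and the uniform-in-$n$ negligibility of the small-atom contribution to $V_n^2/a_n^2$ via Karamata estimates---are routine.
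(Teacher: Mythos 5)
Your proposal is correct and follows essentially the same route as the paper: both read the corollary off the joint convergence (\ref{all1}), $\big(S_n/a_n,\ \sum_{i=1}^n X_i^2/a_n^2,\ \sum_{i=1}^n\delta_{X_i/a_n}\big)\stackrel{vd}{\longrightarrow}\big(X(1),\,[X]_1,\,\sum_j\delta_{\beta_j}\big)$, already established in the proof of Theorem~\ref{th1} via the exchangeability machinery --- precisely the coupling you correctly identify as the crux. The only differences are in execution: the paper invokes the remark below Theorem~2.2 of Kallenberg (1973) to restate the point-process coordinate as convergence of the ordered points in ${\bf R}^{\infty}$, so that $\max_{1\le i\le n}|X_i|/a_n\rightarrow\max(|\beta_1|,\beta_2)=J$ is immediate, whereas you apply the continuous mapping theorem to the sup-of-support functional (equally valid, since the limiting configuration a.s.\ has finitely many points outside any neighbourhood of the origin); and you re-derive the first two coordinates from the point measure via truncation and Karamata estimates, which is sound but unnecessary, since (\ref{all1}) already delivers them jointly with the point process.
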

We note in passing that, under the conditions of Corollary \ref{th2}, the joint convergence in distribution as $n \rightarrow \infty$
 \be
\Big(\frac{S_n}{a_n}, \frac{V_n^2}{a_n^2}\Big)\stackrel{d}{\rightarrow} (X(1), [X]_1) \label{last1}
 \ee
amounts to {\it an extension of Raikov's theorem from $X\in \mbox{DAN}$ to $X$ being in the domain of attraction of $G$ of index $\alpha \in
(0,2]$}. When $\alpha=2$, i.e., when $X\in \mbox{DAN}$, the statement of (\ref{last1}) reduces to Raikov's theorem in terms of having
$\Big(\frac{S_n}{a_n}, \frac{V_n^2}{a_n^2}\Big)\stackrel{d}{\rightarrow} (N(0,1), 1)$ as $n \rightarrow \infty$ (cf. Lemma 3.2 in Gin\'{e} {\it
et al.} (1997)).

As a consequence of Corollary \ref{th2}, {\it under the same conditions}, as $n\rightarrow \infty$, we have \be
 && \frac{\max_{1\le i\le n}|X_i|}{S_n} \stackrel{d}{\rightarrow}
 \frac{J}{X(1)}, \label{last2}\ee
 and
 \be
 && \frac{\max_{1\le i\le n}|X_i|}{V_n} \stackrel{d}{\rightarrow}
 \frac{J}{\sqrt{[X]_1}}. \label{last3}
 \ee

In case of $\alpha=2,~G$ is a normal distribution, $X \in \mbox{DAN}$ with $EX=0$, and $X(t)/\sqrt{[X]_1}$ is a standard Brownian motion.
Consequently, $J$ in Corollary \ref{th2} is zero and, as $n \rightarrow \infty$, we arrive at the conclusion that when
 $X\in \hbox{DAN}$ and $EX=0$, then the respective conclusions of (\ref{last2}) and (\ref{last3}) reduce to ${\max_{1\le i\le
n}|X_i|}/{|S_n|}\stackrel{P}{\rightarrow} 0$ and ${\max_{1\le i\le n}|X_i|}/{V_n}\stackrel{P}{\rightarrow} 0$.Kesten and Maller (1994, Theorem
3.1) proved that ${\max_{1\le i\le n}|X_i|}/{|S_n|}\stackrel{P}{\rightarrow} 0$ is equivalent to having
$$\frac{x|EXI(|X|\le x)|+EX^2I(|X|\le x)}{x^2P(|X|>x)}\rightarrow \infty,$$
and O'Brien(1980) showed that ${\max_{1\le i\le n}|X_i|}/{V_n}\stackrel{P}{\rightarrow} 0$ is equivalent to $X\in \mbox{DAN}$.

For $X$ in the domain of attraction of a stable law with index $\alpha \in (0,2)$, Darling (1952) studied the asymptotic behavior of
$S_n/\max_{1\le i\le n} |X_i|$  and derived  the characteristic function of the appropriate limit distribution. Horv\'{a}th and Shao (1996)
established a large deviation and, consequently,  the law of the iterated logarithm for $S_n/\max_{1\le i\le n} |X_i|$ under the same condition
for $X$ symmetric.

Proofs of Theorems \ref{th1}, \ref{prop2} and Corollary \ref{th2}  are given in Section \ref{sect2}.

\section{Proofs} \label{sect2}

Before proving Theorem \ref{th1}, we conclude the following lemma.
\begin{lemma} \label{prop1} Let $G\sim S(\alpha, \gamma, c, p, q)$ with index $\alpha \in (0,2]$ and let $Y_{\alpha}$ be a random variable
associated with this distribution. If there exist some positive constants $\{A_n\}$  satisfying $S_n/A_n\stackrel{d}{\rightarrow} Y_{\alpha}$
as $n \rightarrow \infty$, then
\begin{enumerate}
\item[(1)] $X$ is in the domain of attraction of $G$,
\item[(2)] $EX=0$ if $1<\alpha\le 2$,  and $\lim\limits_{n\rightarrow \infty}n E\sin(X/a_n)$ exists and is finite if $\alpha=1$.
\end{enumerate}
Conversely, if the above conditions (1) and (2) hold, then  \bestar S_n/a_n\stackrel{d}{\rightarrow} Y_{\alpha}',~~\alpha\in (0,2], \eestar
where $Y_{\alpha}'$ is a random variable with distribution
 $G' \sim S(\alpha,\gamma', 1, p, q)$,  with
$\gamma'=0$ if $\alpha\ne 1$ and  $\gamma'=\lim\limits_{n\rightarrow \infty}n E\sin(X/a_n)$ if $\alpha=1$.
\end{lemma}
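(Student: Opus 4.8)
The plan is to recognize Lemma~\ref{prop1} as essentially a repackaging of classical domain-of-attraction theory together with Theorem~B (Chistyakov and G\"otze (2004)), with the only real work being the bookkeeping of normalizing constants. For the direct implication, suppose $S_n/A_n \stackrel{d}{\to} Y_\alpha$ with $Y_\alpha$ non-degenerate stable of index $\alpha$. By the classical convergence-of-types theorem, $X$ lies in the domain of attraction of the stable law $G$ of index $\alpha$; this gives conclusion (1) immediately, and moreover forces $A_n$ to be regularly varying of index $1/\alpha$ and comparable to $a_n$ (indeed $A_n/a_n \to$ a positive constant, up to the usual centering subtleties when $\alpha\le 1$). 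For (2): when $1<\alpha\le2$ the limit $Y_\alpha$ has mean zero (a stable law with $\alpha>1$ is integrable), and subtracting means in $S_n/A_n \to Y_\alpha$ together with $A_n\to\infty$ forces $nEX/A_n \to 0$, hence $EX=0$ (this is where one uses that a non-zero mean would produce an unbounded deterministic drift $n\mu/A_n$ since $A_n = o(n)$ for $\alpha<2$, and for $\alpha=2$ one invokes the DAN characterization directly). When $\alpha=1$, convergence of $S_n/A_n$ to a Cauchy-type limit is exactly Feller's condition: the characteristic function computation shows $\mathbb E e^{itS_n/A_n} = (\mathbb E e^{itX/A_n})^n$ converges iff $n\,\mathbb E\sin(X/A_n)$ converges and is finite, and since $A_n \asymp a_n$ this is the stated condition on $a_n$. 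So the direct half is Theorem~B's necessity combined with the convergence-of-types theorem.

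For the converse, assume (1) and (2). The classical theory (Gnedenko--Kolmogorov; Feller (1971), Ch.~XVII) tells us that $X\in\mathrm{DA}(\alpha)$ implies $(S_n - B_n)/A_n \stackrel{d}{\to}$ some $\alpha$-stable law for suitable $A_n$ regularly varying of index $1/\alpha$ and suitable centering $B_n$. I would first check that the choice $A_n = a_n$ (with $a_n = \inf\{x>0: nx^{-2}EX^2I(|X|<x)\le 1\}$) is an admissible norming sequence: this is standard, since $a_n^2 \sim n\,\ell(a_n)$ with $\ell(x)=EX^2I(|X|<x)$ slowly varying (for $\alpha<2$, $\ell(x) \asymp x^{2-\alpha}h(x)$), so $a_n$ is the canonical norming. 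Then I would pin down the centering and the scale parameter of the limit by a direct characteristic-function computation: writing $\phi_n(t) = \mathbb E \exp(itX/a_n)$, one has $\phi_n(t)^n \to f(t)$ with $f$ of the form~(\ref{hua1}). For $\alpha\ne1$ and $\alpha<2$, the condition (2) on the mean (when $1<\alpha<2$) or the absence of a mean-centering requirement (when $\alpha<1$) combined with the definition of $a_n$ forces the centering to be asymptotically negligible and the scale parameter $c$ to equal $1$, giving $S_n/a_n \stackrel{d}{\to} Y_\alpha' \sim S(\alpha,0,1,p,q)$. For $\alpha=2$ this is just the DAN statement with the canonical norming $a_n = n^{1/2}\ell_X(n)$, giving the standard normal limit $S(2,0,1,\cdot,\cdot)$. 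For $\alpha=1$, Feller's condition is precisely what makes $n\,\mathbb E\sin(X/a_n)$ converge; its limit $\gamma'$ is the surviving centering constant in~(\ref{hua1}), so $S_n/a_n \stackrel{d}{\to} Y_1' \sim S(1,\gamma',1,p,q)$, while the symmetrized part again has scale $c=1$ by the definition of $a_n$.

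The main obstacle — and the only place demanding care rather than citation — is nailing down that the scale parameter of the limit is exactly $1$ and the centering contributes exactly $\gamma'$ (and nothing else) under the specific normalization $a_n$. This is a matter of unwinding the asymptotics of $n(\phi_n(t)-1)$: one splits $X/a_n$ into its truncated and tail contributions, uses the regular variation of the tail $P(|X|>x) \sim \frac{2-\alpha}{\alpha}x^{-\alpha}h(x)$ together with $p,q$ to get the imaginary/asymmetric part, and uses $nEX^2I(|X|<a_n)/a_n^2 \to 1$ (the defining property of $a_n$) to get the real part's coefficient to be exactly the one appearing in~(\ref{hua1}) with $c=1$. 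The $\alpha=1$ case needs the extra step of checking that $n\,\mathbb E\sin(X/a_n)$ is the right centering functional, which is classical (Feller (1971), p.~522ff). Throughout, the convergence-of-types theorem guarantees uniqueness of the limit up to affine transformations, so once the characteristic function is computed there is nothing left to prove. Finally, I would remark that this lemma is what the introduction calls a ``scalar normalized version of Theorem~B'': the content beyond Theorem~B is purely the explicit identification of the norming constants as $a_n$ and the limit as $S(\alpha,\gamma',1,p,q)$.
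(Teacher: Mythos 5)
Your proposal is correct and follows essentially the same route as the paper: both reduce the lemma to the classical Feller characterization of domains of attraction with the canonical norming $a_n$ and centering $b_n$, and both use the convergence-of-types theorem in the direct part to force $b_n$ to converge (hence $EX=0$ for $1<\alpha<2$ since $n/a_n\to\infty$, and finiteness of $\lim_n nE\sin(X/a_n)$ for $\alpha=1$). The only difference is that you propose to re-derive the characteristic-function asymptotics identifying the limit as $S(\alpha,\gamma',1,p,q)$, whereas the paper obtains this directly by citing Feller (1971), Theorem 3, p.~580.
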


\begin{proof}[{\bf Proof.}] If $\alpha=2$, then $G$ is a normal distribution and the conclusion with $X \in \mbox{DAN}$ and $EX=0$  is clear.

If $0<\alpha<2$, then $X$ belongs to the domain of attraction of a stable law $G$  with the  characteristic function $f(t)$ as in (\ref{hua1})
if and only if (cf. Theorem 2 in Feller (1971), page 577)\bestar \ell(x)=EX^2I(|X|\le x)=x^{2-\alpha}L(x), ~x \rightarrow \infty,\eestar and
\bestar \frac{P(X>x)}{P(|X|>x)}\rightarrow p,~ x\rightarrow \infty,\eestar
 where $L(x)$ is a slowly varying function at infinity. In this case, as $n \rightarrow \infty$, we have (cf.  Theorem 3 in Feller(1971), page 580)
 \be \frac{S_n}{a_n}-b_n \stackrel{d}{\rightarrow} \tilde{Y}_{\alpha} \mbox{~with distribution~} \tilde{G},~~\alpha \in (0,2), \label{hua2}\ee
where  \bestar b_n=\left\{\begin{array}{ll} (n/a_n)EX,& \mbox{~if~} 1<\alpha<2;\\
n E\sin (X/a_n), &  \mbox{~if~} \alpha=1;\\
 0, &  \mbox{~if~} 0<\alpha<1, \end{array}
 \right.\eestar
and $\tilde{G}\sim S(\alpha,0, 1, p, q)$. Thus if (2) holds, then, as $n \rightarrow \infty$, we have \bestar \frac{S_n}{a_n}
\stackrel{d}{\rightarrow} {Y}_{\alpha}' \mbox{~with distribution~} G',~~\alpha \in (0,2), \eestar where $G' \sim S(\alpha,\gamma', 1, p, q)$
with $\gamma'=0$ if $\alpha\ne 1$ and $\gamma'=\lim\limits_{n\rightarrow \infty}n E\sin(X/a_n)$ if $\alpha=1$.

If, as $n \rightarrow \infty$, there exists some positive constants $\{A_n\}$  satisfying $S_n/A_n\stackrel{d}{\rightarrow} Y_{\alpha}$ with
distribution $G$ with index $\alpha\in (0,2)$, then (1) holds. Hence (\ref{hua2}) is also true. Consequently, by Theorem 1.14 in Petrov (1995),
we have  $b_n\rightarrow b$ for some real constant $b$, as $n \rightarrow \infty$. Thus if $\alpha=1$, then $\lim\limits_{n\rightarrow \infty}n
E\sin(X/a_n)$ exists and is finite, and if $1<\alpha<2$, since in this case $n/a_n=na_n^{-\alpha}L(a_n)(a_n^{\alpha-1}/L(a_n))\sim
a_n^{\alpha-1}/L(a_n) \rightarrow \infty$ as $n\rightarrow \infty$,  we have $EX=0$.

Proof of Lemma \ref{prop1} is now complete.
\end{proof}

\vskip 0.5cm

 \begin{proof}[{\bf Proof of Theorem \ref{th1}.}]  Since $X(t)$ is a L\'{e}vy process, we have the L\'{e}vy-It\^{o} decomposition (see for instance
Corollary 15.7 in Kallenberg (2002) ) \be X(t)=b t+\sigma W(t)+\int_0^t\int_{|x|\le 1} x(\eta-E\eta)(ds,dx)+\int_0^t\int_{|x|>1} x\eta(ds,dx),
\label{hu1}\ee for some $b\in {\bf R}, \sigma\ge 0$,~where $W(t)$ is a Brownian motion independent of $\eta$,  and $\eta=\sum_t \delta_{t,
\Delta X_t}$ is a
 Poisson process on $(0,\infty)\times ({\bf
R}\setminus\{0\})$ with  $E\eta=\lambda \otimes \nu$,  where $\Delta X_t=X_t-X_{t-}$ is the jump of $X$ at time $t$,~ $\lambda$ is the Lebesgue
measure on $(0,\infty)$ and $\nu$ is some measure on ${\bf R}\setminus\{0\}$ with $\int (x^2\wedge 1) \nu(dx)<\infty$. The quadratic variation
of $X(t)$ is (cf. Corollary 26.15 in Kanllenberg (2002))
 \be [X]_t=\sigma^2 t+\sum_{s\le t} (\Delta X_s)^2. \label{add3}\ee

Noting that a L\'{e}vy Process is   exchangeable, by Theorem 2.1 of Kallenberg (1973) (or Theorem 16.21 in Kallenberg (2002)), $X(t)$ has a
version $X'(t)$,  with representation \be X'(t)=b't+\sigma' {B}(t)+\sum_{j}\beta_j (I(\tau_j\le t)-t), ~~t\in [0,1],\label{hu2} \ee in the
sense of a.s. uniform convergence, where
\begin{enumerate}
\item[(1)] $ b'=X(1),~\sigma' \ge 0,~\beta_1\le \beta_3 \le \cdots\le 0\le \cdots \le \beta_4\le \beta_2$
are random variables with $\sum_j \beta_j^2<\infty,~a.s.$,
\item[(2)] ${B}(t)$ is a Brownian bridge on $[0,1]$,
\item[(3)]  $\tau_1,\tau_2,\cdots$ are independent and uniformly distributed random variables on $[0,1]$,
\end{enumerate}
and the three groups (1)-(3) of random elements are independent. $X(t)$ has a version $X'(t)$ means that for any $t \in [0,1]$, ~$X(t)=X'(t)~
~a.s.$ But since both  $X(t)$ and $X'(t)$ are right continuous, we have $$P(X(t)=X'(t)~\mbox{for all~} t\in [0,1])=1.$$

Thus we may say that $X(t)\equiv X'(t)$ on $[0,1]$.  By (\ref{hu2}), we  get that $(\beta_1,\beta_2,\cdots)$ are the sizes of the jumps  of
$\{X(t), t\in [0,1]\}$ and $(\tau_1,\tau_2,\cdots)$ are the related jump times. Thus $\eta=\sum_j \delta_{\tau_j,\beta_j}$ on $(0,1]\times
({\bf R}\setminus\{0\})$ and, by (\ref{add3}),
$$[X]_1=\sigma^2 +\sum_{s\le 1} (\Delta X_s)^2=\sigma^2 +\sum_j \beta_j^2.$$  We are to see now that we also have \be \sigma'=\sigma. \label{add2}\ee
Write \bestar &&X^n(t)=b t+\sigma W(t)+\int_0^t\int_{1/n<|x|\le 1}
x(\eta-E\eta)(ds,dx)+\int_0^t\int_{|x|>1} x\eta(ds,dx)\\
&&~~~~~~~=\tilde{b}_n t+\sigma \tilde{B}(t)+\sum_{|\beta_j|>1/n} \beta_jI(\tau_j\le t),~~n\ge 1, \eestar where $\tilde{b}_n=b+\sigma W(1)-\int
xI(1/n<|x|\le 1)\nu(dx)$ and $\tilde{B}(t)=W(t)-tW(1)$ is a Brownian bridge. Noting that $W(1)$ and $\{\tilde{B}(t)\}$ are independent,
$X^n(t)$ is also an exchangeable process for each $n\ge 1$. From the proof of Theorem 15.4 in Kallenberg (2002), we have $$E\sup_{0\le s\le 1}
(X(s)-X^n(s))^2\rightarrow 0,~~n \rightarrow \infty.$$ Thus, as $n \rightarrow \infty$, $X^n(t)\stackrel{d}{\rightarrow} X(t)$ on $D(0,1)$ with
the Skorokhod $J_1$ topology. Then, by Theorem 3.8 in Kallenberg (2005), as $n \rightarrow \infty$, we have \bestar
\sigma^2+\sum_{|\beta_j|>1/n}\beta_j^2\stackrel{vd}{\longrightarrow}
\sigma'^{2}+\sum_j\beta_j^2,\eestar
where $\stackrel{vd}{\longrightarrow}$ means convergence in distribution with respect to the vague topology.
 Hence $\sigma'^2=\sigma^2$, and (\ref{add2}) holds.

By Lemma \ref{prop1},  $S_n/a_n \stackrel{d}{\rightarrow} X(1)$. Hence, by Theorem 16.14 in Kallenberg (2002), we have $S_{[nt]}/a_n
\stackrel{d}{\rightarrow} X(t)$  on $D(0,1)$ with the Skorokhod $J_1$ topology. By noting that $\{X_i/a_n, i=1,\cdots, n\}$ are exchangeable
random variables for each $n$, and by using Theorems 3.8 and 3.13 in Kallenberg (2005), as $n \rightarrow \infty$, we have \be \Big(\frac{S_n}{a_n},\,
\sum_{i=1}^n \frac{X_i^2}{a_n^2},\, \sum_{i=1}^n \delta_{X_i/a_n} \Big)\stackrel{vd}{\longrightarrow} (X(1),\, [X]_1,\, \sum_j\delta_{\beta_j})
~~\mbox{in}~~ {\bf R}\times {\bf R}_+\times \mathcal{N}({\bf{R}}\setminus\{0\}),
~~~~\label{all1} \ee
where $\mathcal{N}({\bf{R}}\setminus\{0\})$ is the space of integer-valued measures on ${\bf{R}}\setminus\{0\}$ endowed with the vague topology.
%
Hence \bestar \Big(\frac{S_n}{V_n}, \, \sum_{i=1}^n \frac{X_i^2}{V_n^2},  \, \sum_{i=1}^n\delta_{X_i/V_n} \Big)\stackrel{vd}{\longrightarrow}
\Big(\frac{X(1)}{\sqrt{[X]_1}}, \, 1, \,  \sum_j\delta_{\beta_j/\sqrt{[X]_1}}\Big)
~~\mbox{in}~~ {\bf R}\times {\bf R}_+\times \mathcal{N}({\bf{R}}\setminus\{0\}). \eestar Since $\{X_i/V_n,
i=1,\cdots, n\}$ are exchangeable for each $n$, by Theorems 3.8 and 3.13 in Kallenberg (2005), we have \beq
\frac{S_{[nt]}}{V_n}\stackrel{d}{\rightarrow} \frac{X(t)}{\sqrt{[X]_1}}\eeq
on $D[0,1]$, equipped with the Skorokhod $J_1$ topology.
\end{proof}

\vskip 0.5cm

 \begin{proof}[{\bf  Proof of Theorem \ref{prop2}.}]  It is similar to the proof of Theorem \ref{th1} with only minor changes. Hence we omit the
details.
\end{proof}

\vskip 0.5cm
 \begin{proof}[\bf Proof of Corollary \ref{th2}.]  Note that (\ref{all1})
 is equivalent to (see remarks below Theorem 2.2 of Kallenberg (1973)) \be \Big(\frac{S_n}{a_n}, \sum_{i=1}^n
\frac{X_i^2}{a_n^2}, \frac{{X}_{n1}}{a_n},\frac{{X}_{n2}}{a_n},\cdots \Big)\stackrel{d}{\rightarrow} (X(1), [X]_1, \beta_1,
\beta_2,\cdots) \mbox{~in ~} {\bf R}^{\infty}, \label{add1}\ee where ${X}_{n1}\le {X}_{n3}\le\cdots \le 0\le \cdots {X}_{n4}\le {X}_{n2}$ are
obtained by ordering $\{X_i, 1\le i\le n\}\cup \{\tilde{X}_i, i>n\}$ with $\tilde{X}_i\equiv 0,~i=1,2,\cdots$.
Now the conclusion of (\ref{3in1}) follows directly from (\ref{add1}).
\end{proof}

\end{document}